\newcommand{\R}{\ensuremath{\mathbb{R}}}
\newcommand{\N}{\ensuremath{\mathbb{N}}}
\newcommand{\bx}{{\bf x}}
\newcommand{\by}{{\bf y}}
\newcommand{\bz}{{\bf z}}
\newcommand{\C}{\ensuremath{\mathbb{C}}}
\newtheorem {theorem} {Theorem}
\newtheorem {definition} {Definition}
\newtheorem {corollary}{Corollary}
\newtheorem {lemma}{Lemma}
\newtheorem {mtheorem} {Theorem}
\begin{document}
\renewcommand{\arraystretch}{1.5}

\title[Periodic and antiperiodic aspect of the Floquet normal form]{On the periodic and antiperiodic aspects\\ of the Floquet normal form}
\author[D.D. Novaes and P.C.C.R. Pereira]
{Douglas D. Novaes$^1$ and Pedro C.C.R. Pereira$^2$}

\address{Departamento de Matem\'{a}tica - Instituto de Matemática, Estatística e Computação Científica (IMECC) - Universidade Estadual de Campinas (UNICAMP), Rua S\'{e}rgio Buarque de Holanda, 651, Cidade
Universit\'{a}ria Zeferino Vaz, 13083--859, Campinas, SP, Brazil}
\email{ddnovaes@unicamp.br$^1$}
\email{pedro.pereira@ime.unicamp.br$^2$}

\keywords{Floquet theory, ordinary differential equations, linear differential equations with periodic coefficients, periodic orbits.}

\subjclass[2010]{34A30, 37C27}

\begin{abstract}
Floquet's Theorem is a celebrated result in the theory of ordinary differential equations. Essentially, the theorem states that, when studying a linear differential system with $T$-periodic coefficients, we can apply a, possibly complex, $T$-periodic change of variables that transforms it into a linear system with constant coefficients. In this paper, we explore further the question of the nature of this change of variables. We state necessary and sufficient conditions for it to be real and $T$-periodic. Failing those conditions, we prove that we can still find a real change of variables that is ``partially" $T$-periodic and ``partially" $T$-antiperiodic. We also present applications of this new form of Floquet's Theorem to the study of the behavior of solutions of nonlinear differential systems near periodic orbits.
\end{abstract}
\maketitle


\section{Introduction and statement of results}
Floquet's Theorem is a celebrated result in the theory of ordinary differential equations. After its original appearance in 1883 (see \cite{floquet}), it has been restated in a number of important textbooks on the subject (see, for instance, \cite{chiconeODE,coddingtonLevinsonODE,haleordinary}). Furthermore, many fruitful applications have been found for it, such as the study of quantum systems governed by a time-periodic Hamiltonian (see, for example, \cite{GRIFONI1998229,Holthaus_2016,shirley}), the computation of numerical approximations of local stable and unstable manifolds of periodic orbits  (see, for example, \cite{castelli}), and the obtainment of integral manifolds for perturbed differential equations via averaging theory (see, for example,  \cite{haleinvariant,NovPer2023}).

Essentially, the theorem states that, when studying a linear differential system with $T$-periodic coefficients, we can apply a $T$-periodic change of variables that transforms it into a linear system with constant coefficients. There is, however, a caveat, which can be quite bothersome in some cases: this change of variables possibly involves complex matrices. More precisely, consider a linear system of the form
\begin{equation}\label{systemmain}
	\dot \bx = A(t) \bx, \qquad \bx \in \R^n,
\end{equation}
where $A: \R \to \mathbb{R}^{n \times n}$ is $T$-periodic, $T>0$. Floquet's Theorem can be stated as follows:

\begin{theorem}[Floquet's Theorem] \label{theoremfloquet}
	Let $\Phi(t)$ be a fundamental matrix solution of \eqref{systemmain}. Then, there are $B \in \C^{n \times n}$ and a $T$-periodic matrix function $P:\R \to \C^{n \times n}$ such that $\Phi(t) = P(t) e^{tB}$ (the Floquet Normal Form of $\Phi(t)\,$). In particular, the change of variables $\by = P^{-1}(t) \bx$ transforms \eqref{systemmain} into $\dot \by = B \by$.
\end{theorem}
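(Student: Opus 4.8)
The plan is to build the Floquet normal form directly from the monodromy matrix. First I would fix the fundamental matrix solution $\Phi(t)$ and, without loss of generality, normalize it so that $\Phi(0) = \Id$; the general case follows by replacing $\Phi(t)$ with $\Phi(t)\Phi(0)^{-1}$ and conjugating. The monodromy matrix is $M := \Phi(T)$. Since $\Phi(t)$ is a fundamental matrix solution and $A$ is $T$-periodic, the function $t \mapsto \Phi(t+T)$ is again a fundamental matrix solution of \eqref{systemmain}, so there is a constant invertible matrix $C$ with $\Phi(t+T) = \Phi(t) C$; evaluating at $t = 0$ gives $C = M$, hence the key shift identity
\begin{equation}\label{eq:shift}
\Phi(t+T) = \Phi(t)\, M, \qquad M = \Phi(T).
\end{equation}

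The next step is to extract a logarithm of the monodromy matrix. Because $M$ is invertible (it is a fundamental matrix solution evaluated at a point, so $\det M \neq 0$ by Liouville's formula), there exists $B \in \C^{n\times n}$ with $e^{TB} = M$; this is the standard fact that every invertible complex matrix has a complex logarithm, obtained for instance by putting $M$ in Jordan form and taking the principal logarithm on each block (the possible need to pass to $\C$ here, even when $M$ is real — e.g. when $M$ has a negative real eigenvalue — is exactly the source of the ``caveat'' mentioned in the introduction, and is what the rest of the paper addresses). Set $B := \frac{1}{T}\log M$ for such a choice of logarithm.

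Now define $P(t) := \Phi(t)\, e^{-tB}$. This is a product of analytic (indeed smooth) invertible matrix functions, hence invertible for every $t$, and $\Phi(t) = P(t) e^{tB}$ by construction. It remains to check $T$-periodicity of $P$. Using \eqref{eq:shift} and $e^{TB} = M$,
\begin{equation}\label{eq:periodP}
P(t+T) = \Phi(t+T)\, e^{-(t+T)B} = \Phi(t)\, M\, e^{-TB} e^{-tB} = \Phi(t)\, e^{-tB} = P(t),
\end{equation}
so $P$ is $T$-periodic as claimed. Finally, for the change of variables: if $\bx(t)$ solves \eqref{systemmain} and $\by := P^{-1}(t)\bx$, then differentiating $\bx = P(t)\by$ and using $\dot\Phi = A\Phi$ together with $\dot P = \dot\Phi e^{-tB} - \Phi e^{-tB} B = A P - P B$, one gets $P\dot\by = (\dot\bx - \dot P \by) = (A P \by - (A P - P B)\by) = P B \by$, hence $\dot\by = B\by$.

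\textbf{Main obstacle.} The only genuinely nontrivial ingredient is the existence of a complex logarithm $B$ of the invertible matrix $M$ satisfying $e^{TB}=M$; everything else is a direct manipulation of \eqref{eq:shift}. I would handle the logarithm by reducing to Jordan blocks $\lambda I + N$ with $\lambda \neq 0$ and $N$ nilpotent, writing $\log(\lambda I + N) = (\log\lambda) I + \log(I + \lambda^{-1}N)$ with the latter given by its terminating power series, and then rescaling by $1/T$. This is also the natural place to record which eigenvalue structures of $M$ force $B$ — and hence $P$ — to be complex, setting up the refined real/periodic-vs-antiperiodic analysis that follows.
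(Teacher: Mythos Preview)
Your proposal is correct and follows essentially the same route as the paper. The paper packages the argument inside a slightly more general statement (Theorem~\ref{theoremfloquetrepresentation}, with an integer parameter $k$) and then specializes to $k=1$, but for $k=1$ the proof is literally yours: establish the shift identity $\Phi(t+T)=\Phi(t)\,\Phi^{-1}(0)\Phi(T)$, pick a complex logarithm $B$ of the monodromy, set $P(t)=\Phi(t)e^{-tB}$, and check $T$-periodicity; the Jordan-block construction of the logarithm you sketch is exactly Lemma~\ref{lemmalogmatrix}. The only cosmetic difference is that the paper does not normalize $\Phi(0)=\Id$ but works directly with $C=\Phi^{-1}(0)\Phi(T)$, which spares you the conjugation step at the end.
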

 
 It is well known, in fact, that a real periodic change of variables exists provided that we allow it to be $2T$-periodic, but in some applications this doubled period can be somewhat limiting (for instance, in \cite[Lemma 4.1]{haleinvariant}, the use of the complex form of Floquet's Theorem, so as to avoid doubling the period, imposes the need of verification of the fairly restrictive condition that the determinant of a matrix function is non-vanishing). In this paper, we explore further the question of the nature of the transformation of variables provided in Floquet's Theorem. 
 
 In order to effectively state the results obtained, we must define an important quantity associated to system \eqref{systemmain}. To ensure that this quantity is well-defined, we remind the reader that, if $\Phi(t)$ is a fundamental matrix solution of \eqref{systemmain}, then $\Phi^{-1}(0) \Phi(T)$ is called a \textbf{monodromy matrix} of this system, and that the Jordan normal form of this matrix does not depend on the choice of $\Phi(t)$. In fact, if $\Psi(t)$ and $\Phi(t)$ are two different fundamental matrix solutions of \eqref{systemmain}, then the monodromy matrices associated to those solutions are similar under the transformation $\Phi^{-1}(0)\Psi(0)$. We are now ready to define the \textbf{A-index} of a system of the form given in \eqref{systemmain}.
 
 \begin{definition}
 	The antiperiodicity index (A-index, for short) of system \eqref{systemmain} is the sum of the dimensions of Jordan blocks associated to a real negative eigenvalue that appear an odd number of times (each one counted only once) in the Jordan normal form of a monodromy matrix $\Phi^{-1}(0) \Phi(T)$ of \eqref{systemmain}.
 \end{definition}
 
 Having defined the A-index, we will show that we can always find a real change of variables that is ``partially" $T$-periodic and ``partially" $T$-antiperiodic transforming \eqref{systemmain} into a linear system with constant coefficients. Moreover, the A-index is a measure of the "degree of antiperiodicity" of this change of variables. More precisely, we will prove the following result:
\begin{mtheorem}\label{theoremmain}
Let $d \in \N$ be the A-index of \eqref{systemmain}. Then, there are $R \in \R^{n \times n}$, and $Q: \R \to \R^{n \times n}$ such that $\Phi(t):=Q(t)e^{tR}$ is a fundamental matrix solution of \eqref{systemmain} and $Q$ satisfies
	\[
	Q(t+T) = Q(t) \, \left[I_{n-d} \oplus (-I_d)\right]
	\]
	for all $t \in \R$, where the symbol $\oplus$ denotes the direct sum of square matrices. In particular, the change of variables $\by=Q^{-1}(t)\bx$ transforms \eqref{systemmain} into the real linear system with constant coefficients $\dot \by = R \by$.
\end{mtheorem}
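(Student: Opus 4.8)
The plan is to work with the principal fundamental matrix solution $\Phi_0(t)$ of \eqref{systemmain}, i.e. the one normalized by $\Phi_0(0)=I_n$, so that its monodromy matrix is $M:=\Phi_0(T)$ and $\Phi_0(t+T)=\Phi_0(t)M$ for every $t\in\R$. Writing $S:=I_{n-d}\oplus(-I_d)$, I claim that the whole theorem reduces to producing a real invertible matrix $C$ and a real matrix $R$ \emph{commuting with $S$} such that
\begin{equation}\label{keyeqfnf}
e^{TR}=S\,C^{-1}MC .
\end{equation}
Indeed, granted \eqref{keyeqfnf}, set $\Phi(t):=\Phi_0(t)C$ and $Q(t):=\Phi(t)e^{-tR}$. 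Then $Q$ is a product of real matrices, $\Phi=Qe^{tR}$ is a fundamental matrix solution of \eqref{systemmain}, and, using $\Phi(t+T)=\Phi_0(t)MC=\Phi(t)\,C^{-1}MC$, relation \eqref{keyeqfnf}, $S^2=I_n$, and $SR=RS$, one computes $Q(t+T)=\Phi(t)\,(C^{-1}MC)\,e^{-TR}e^{-tR}=\Phi(t)\,S\,e^{-tR}=\Phi(t)e^{-tR}S=Q(t)S$, which is exactly the asserted relation $Q(t+T)=Q(t)\,[I_{n-d}\oplus(-I_d)]$. The statement about the change of variables $\by=Q^{-1}(t)\bx$ then follows as in the standard proof of Floquet's Theorem (differentiate $\Phi=Qe^{tR}$ to get $\dot Q=A(t)Q-QR$).

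To establish \eqref{keyeqfnf}, recall that $M$ is invertible, so $0$ is not one of its eigenvalues. Choose a real invertible $C$ putting $M':=C^{-1}MC$ into real Jordan canonical form, with the Jordan blocks reordered so that the last $d$ coordinates carry precisely one copy of each real Jordan block associated with a negative real eigenvalue that occurs an odd number of times in the Jordan form of $M$, and the first $n-d$ coordinates carry all the remaining blocks. By the definition of the A-index this is consistent with the dimension count $d$, and it exhibits $M'$ as block diagonal, $M'=M'_1\oplus M'_2$ with $M'_1\in\R^{(n-d)\times(n-d)}$ and $M'_2\in\R^{d\times d}$. Since $S$ acts as $+I$ on the first block and as $-I$ on the second, $SM'=M'_1\oplus(-M'_2)$, so it suffices to find a real logarithm $TR_1$ of $M'_1$ and a real logarithm $TR_2$ of $-M'_2$; then $R:=R_1\oplus R_2$ is real, commutes with $S$, and satisfies \eqref{keyeqfnf}.

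Both logarithms exist. For $-M'_2$: each of its Jordan blocks has the form $-J_k(\lambda)$ with $\lambda<0$, hence $-M'_2$ has only \emph{positive} real eigenvalues and no eigenvalue obstruction to a real logarithm at all --- concretely, decomposing each $-J_k(\lambda)$ as $(-\lambda)I_k$ plus a commuting nilpotent and exponentiating a logarithm term by term gives an explicit real $R_2$. For $M'_1$: by the reordering, every real Jordan block of $M'_1$ attached to a negative real eigenvalue now occurs an \emph{even} number of times, while blocks attached to positive real eigenvalues or to genuinely complex eigenvalues are unproblematic; this is exactly the classical criterion (essentially due to Culver) guaranteeing that a real invertible matrix admits a real logarithm, the needed implication being obtained by pairing each surviving $J_k(\lambda)\oplus J_k(\lambda)$ with $\lambda<0$ and realizing such a pair as a real exponential via a purely imaginary square root of $\lambda$, together with the elementary positive-real and complex cases. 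Dividing these logarithms by $T$ yields $R_1,R_2$, and the theorem follows.

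The crux of the argument --- and the step I expect to demand the most care --- is precisely this linear-algebra fact packaged in \eqref{keyeqfnf}: the monodromy matrix $M$ itself may admit \emph{no} real logarithm whatsoever, the obstruction being exactly the odd-multiplicity negative-real-eigenvalue Jordan blocks that the A-index counts. Multiplying by the sign involution $S$ converts each such offending block $-J_k(\lambda)$ (with $\lambda<0$) into $J_k(-\lambda)$, whose eigenvalue is now positive, thereby removing the obstruction, while the block structure of $S$ keeps $R$ block diagonal and hence commuting with $S$ --- which is what makes the ``$T$-periodic part'' and the ``$T$-antiperiodic part'' separate cleanly. Carrying out the bookkeeping of which Jordan blocks migrate into $M'_2$, and checking that what remains in $M'_1$ genuinely satisfies the even-multiplicity condition, is where one must be attentive.
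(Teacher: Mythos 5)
Your proof is correct and follows essentially the same route as the paper's: both reorder the real Jordan form of the monodromy matrix into $\mathcal{J}_1\oplus\mathcal{J}_2$ according to the A-index, take a real logarithm $TR_1$ of $\mathcal{J}_1$ via Culver's criterion and a real logarithm $TR_2$ of $-\mathcal{J}_2$ (positive real spectrum), set $R=R_1\oplus R_2$ and $Q(t)=\Phi(t)e^{-tR}$. The only difference is cosmetic: where you verify $Q(t+T)=Q(t)S$ directly from the packaged identity $e^{TR}=S\,C^{-1}MC$, the paper reaches the same computation by introducing the auxiliary complex $\tilde R=R_1\oplus\bigl(R_2+i\tfrac{\pi}{T}I_d\bigr)$ and invoking its General Floquet Normal Form lemma with $k=2$.
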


The ideas presented above can be applied in the study of the behavior of solutions of perturbed nonlinear differential systems 
\begin{equation}\label{systempert}
	\dot \bz = f(\bz) +  g(t,\bz), \qquad \bz \in D \subset \R^n,\quad t \in I\subset \R,
\end{equation}
near non-trivial periodic orbits of the unperturbed one $	\dot \bz = f(\bz)$,
where $f$ and $g$ are of class $C^r$, $r\geq 2$.

If we suppose that $\dot \bz = f(\bz)$ admits a hyperbolic limit cycle, it is generally very useful to find a simplifying set of coordinates for \eqref{systempert} in a neighborhood of this cycle. This is the question addressed in the next result. In order to state this result, we remind the reader of the definition of the first variational equation of $\dot \bz = f(\bz)$ along $\varphi(t)$:
\begin{equation}\label{eq:variational}
	\dot \by = Df(\varphi(t))\, \by.
\end{equation}
Observe that the first variational equation is indeed a linear differential equation with periodic coefficients, establishing the link between the results herein presented. 
\begin{mtheorem} \label{theoremorbithyperbolic}
	Suppose that system $\dot \bz = f(\bz)$ admits a non-trivial $T$-periodic hyperbolic limit cycle $\varphi(t)$. Let $d \in \N$ be the A-index of the first variational equation \eqref{eq:variational}. Then, there is a real matrix function $U:\R \to \R^{(n-1) \times(n-1)}$ satisfying
	\[
	U(t+T) = U(t) \, \left[I_{n-d-1} \oplus (-I_d)\right]
	\]
	such that the change of variables $\bz \to (s,{\bf v}, {\bf w}) \in \R \times \R^{(n-d-1)\times (n-d-1)} \times \R^{d \times d}$ given by
	\[
	\bz = \varphi(s) + U(s) \left[\begin{array}{c}
		{\bf v} \\
		{\bf w}
	\end{array}\right]
	\] 
	transforms \eqref{systempert} into the system
	\begin{equation} \label{eq:systemfinalformhyperbolic}
		\begin{aligned}
			&\dot s = 1 + \Lambda_0(s,{\bf v},{\bf w}) +  \tilde{\Lambda}_0(t,s,{\bf v},{\bf w}), \\
			&\dot{\bf v} =  H_1\cdot {\bf v} +   \Lambda_1(s,{\bf v},{\bf w}) +  \tilde{\Lambda}_1(t,s,{\bf v},{\bf w}), \\
			&\dot {\bf w} =  H_2\cdot {\bf w} +  \Lambda_2(s,{\bf v},{\bf w}) +  \tilde{\Lambda}_2(t,s,{\bf v},{\bf w}),
		\end{aligned}
	\end{equation}
	where $H_1 \in \R^{(n-d-1) \times (n-d-1)}$, $H_2 \in \R^{d \times d}$, and the functions appearing in \eqref{eq:systemfinalformhyperbolic} satisfy the following properties:
	\begin{enumerate}[label=(\roman*)]
		\item \label{propertyeigenvalues} If $\{\lambda_\ell \in \C: \ell=1,2,\ldots,n-1 \}$ is the family of eigenvalues of the derivative of the Poincaré map associated to the periodic orbit $\varphi(t)$, then the real parts of the eigenvalues of $H_1 \oplus H_2$ are given by the family $\{ \frac{1}{T} \log |\lambda_\ell|: \ell=1,2,\ldots,n-1\}$.
		\item \label{propertylambda}$\Lambda_\ell(s,0,0)$ and $\frac{\partial \Lambda_\ell}{\partial ({\bf v},{\bf w})}(s,0,0)$ vanish for all $s \in \R$ and each $\ell \in \{0,1,2\}$.
		\item \label{periodiclambda0} $\Lambda_\ell(s+T,{\bf v}, {\bf w}) = \Lambda_\ell(s,{\bf v}, -{\bf w})$ and $\tilde{\Lambda}_\ell(t,s+T,{\bf v},{\bf w}) =\tilde{\Lambda}_\ell(t,s,{\bf v},-{\bf w})$ for $\ell \in \{0,1\}$.
		\item \label{periodiclambda2} $\Lambda_2(s+T,{\bf v}, {\bf w}) = -\Lambda_2(s,{\bf v}, -{\bf w})$ and $\tilde{\Lambda}_2(t,s+T,{\bf v},{\bf w}) =-\tilde{\Lambda}_2(t,s,{\bf v},-{\bf w})$.
		\item \label{propertylambdatilde} There is $C>0$ such that \[\left\|\tilde{\Lambda}_\ell(t,s,{\bf v},{\bf w})\right\| <C \left\| g\left(t,\,\varphi(s)+U(s) \left[\begin{array}{c}
			{\bf v} \\
			{\bf w}
		\end{array}\right] \right) \right\|\] for each $\ell \in \{0,1,2\}$.
	\end{enumerate}
	\end{mtheorem}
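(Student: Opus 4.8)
The plan is to obtain Theorem~\ref{theoremorbithyperbolic} from Theorem~\ref{theoremmain} applied to the first variational equation \eqref{eq:variational}, together with the elementary observations that $\dot\varphi(t)$ is a nowhere-vanishing $T$-periodic solution of \eqref{eq:variational} (differentiate $\dot\varphi = f(\varphi)$) and that the corresponding Floquet multiplier $1$ is, by the hyperbolicity hypothesis, a \emph{simple} eigenvalue of a monodromy matrix of \eqref{eq:variational} --- being positive, it contributes nothing to the A-index $d$.

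First I would put a fundamental matrix solution of \eqref{eq:variational} into a convenient shape. Applying Theorem~\ref{theoremmain} to \eqref{eq:variational} gives $\Phi = Q\,e^{tR}$ with $Q(t+T) = Q(t)S$, where $S := I_{n-d}\oplus(-I_d)$, so the monodromy matrix is $S\,e^{TR}$. Reorganizing the construction behind Theorem~\ref{theoremmain} if necessary, one may take $R$ block diagonal and compatible with $S$; hyperbolicity then forces $0$ to be a simple eigenvalue of $R$, and its (real) eigenvector ${\bf c}_0$, which also spans the multiplier-$1$ eigenspace of the monodromy, lies in the $\R^{n-d}$ summand on which $S$ acts as the identity and satisfies $\dot\varphi(t) = \Phi(t){\bf c}_0 = Q(t){\bf c}_0$. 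Conjugating $\Phi$ by a constant invertible matrix of the form $C_1\oplus I_d$ (hence commuting with $S$, so the relation $Q(t+T) = Q(t)S$ is preserved), I would arrange that the first column of the new $Q$ equals $\dot\varphi$, that the new $R$ has block form $0\oplus H_1\oplus H_2$ with $H_1\in\R^{(n-d-1)\times(n-d-1)}$ and $H_2\in\R^{d\times d}$, and that $S$ is adapted to the finer splitting $\R^n = \R\,\dot\varphi(0)\oplus\R^{n-d-1}\oplus\R^d$. Writing $Q(t) = [\,\dot\varphi(t)\mid U(t)\,]$ then produces the matrix $U\colon\R\to\R^{n\times(n-1)}$ of the statement, which inherits $U(t+T) = U(t)\big(I_{n-d-1}\oplus(-I_d)\big)$ and whose columns, being the last $n-1$ columns of an invertible matrix, span at each time a complement of $\R\,\dot\varphi(t)$. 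I expect this step --- reconciling the black-box output of Theorem~\ref{theoremmain} with the geometry of the periodic direction while keeping every sign matrix compatible --- to be the main obstacle; the remainder is essentially bookkeeping.

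Next I would verify that $\Psi(s,{\bf v},{\bf w}) := \varphi(s) + U(s)\,{\bf u}$, where ${\bf u} := ({\bf v},{\bf w})$ is viewed as a column vector, is a genuine change of variables near the cycle: its Jacobian at ${\bf v} = {\bf w} = 0$ is $[\,\dot\varphi(s)\mid U(s)\,] = Q(s)$, which is invertible, so by the inverse function theorem and compactness of the cycle $\Psi$ is a diffeomorphism from a tubular neighborhood of the zero section onto a tubular neighborhood of the cycle; the relation $U(t+T) = U(t)\big(I_{n-d-1}\oplus(-I_d)\big)$ ensures that $(s+T,{\bf v},-{\bf w})$ and $(s,{\bf v},{\bf w})$ are mapped to the same point, so the construction descends consistently to that neighborhood. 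I would then substitute ${\bf z} = \Psi(s,{\bf u})$ into \eqref{systempert}: using $f(\varphi(s)) = \dot\varphi(s)$, Taylor-expanding $f$ to first order about $\varphi(s)$ with quadratic remainder $F(s,{\bf u})$ (so $F(s,0) = 0$ and $\partial_{\bf u}F(s,0) = 0$), and using $\dot U(s) = Df(\varphi(s))U(s) - U(s)(H_1\oplus H_2)$ --- which follows from $\dot\Phi = Df(\varphi)\Phi$ and the block form $R = 0\oplus H_1\oplus H_2$ --- the equation collapses to
\[
\big[\dot\varphi(s) + \dot U(s){\bf u}\big](\dot s - 1) + U(s)\big(\dot{\bf u} - (H_1\oplus H_2){\bf u}\big) = F(s,{\bf u}) + g\big(t,\Psi(s,{\bf u})\big).
\]
The coefficient matrix $\big[\,\dot\varphi(s) + \dot U(s){\bf u}\mid U(s)\,\big]$ on the left is precisely the Jacobian $D\Psi(s,{\bf u})$, invertible for ${\bf u}$ small; inverting it, the first component gives $\dot s - 1$ and the remaining $n-1$ components, grouped into blocks of sizes $n-d-1$ and $d$, give $\dot{\bf v} - H_1{\bf v}$ and $\dot{\bf w} - H_2{\bf w}$, which defines $\Lambda_\ell$ (the block of $D\Psi^{-1}$ applied to $F$) and $\tilde\Lambda_\ell$ (the block applied to $g$), putting the system into the form \eqref{eq:systemfinalformhyperbolic} with the $H_1,H_2$ found above. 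Property~\ref{propertyeigenvalues} follows at once, since $e^{TH_1}\oplus(-e^{TH_2})$ is the part of the monodromy matrix transverse to $\dot\varphi$, i.e.\ (conjugate to) the derivative of the Poincaré map, so its eigenvalues are the $\lambda_\ell$ and taking moduli shows the real parts of the eigenvalues of $H_1\oplus H_2$ to equal the numbers $\tfrac1T\log|\lambda_\ell|$, $\ell = 1,\dots,n-1$.

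It then remains to read off \ref{propertylambda}--\ref{propertylambdatilde} from these formulas. Property~\ref{propertylambda} holds because $F$ vanishes to second order in ${\bf u}$ while each $\Lambda_\ell$ is a smooth matrix times $F$. For \ref{periodiclambda0} and \ref{periodiclambda2} I would record the twisted equivariance $\Psi(s+T,{\bf u}) = \Psi\big(s,(I_{n-d-1}\oplus(-I_d)){\bf u}\big)$ --- immediate from $\varphi(s+T) = \varphi(s)$ and $U(s+T) = U(s)(I_{n-d-1}\oplus(-I_d))$ --- which yields $F(s+T,{\bf u}) = F\big(s,(I_{n-d-1}\oplus(-I_d)){\bf u}\big)$ and, upon differentiating the corresponding identity for $D\Psi$, also $D\Psi(s+T,{\bf u})^{-1} = \big(I_{n-d}\oplus(-I_d)\big)\,D\Psi\big(s,(I_{n-d-1}\oplus(-I_d)){\bf u}\big)^{-1}$. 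Reading off the rows of $D\Psi^{-1}$ in the block pattern $1 + (n-d-1) + d$, the rows producing $\Lambda_0,\tilde\Lambda_0$ and $\Lambda_1,\tilde\Lambda_1$ stay fixed while those producing $\Lambda_2,\tilde\Lambda_2$ change sign; composing with the equivariance of $F$, respectively with that of $g$, gives exactly \ref{periodiclambda0} and \ref{periodiclambda2}. Finally \ref{propertylambdatilde} follows by taking $C$ to be any constant exceeding the supremum of $\|D\Psi(s,{\bf u})^{-1}\|$ over the compact closure of the tubular neighborhood, since each $\tilde\Lambda_\ell(t,s,{\bf u})$ is a block of $D\Psi(s,{\bf u})^{-1}\,g(t,\Psi(s,{\bf u}))$.
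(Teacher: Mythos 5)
Your proposal is correct and follows the same overall route as the paper's argument, though with two mild presentational differences worth recording. Where the paper constructs the Floquet data for the variational equation from scratch, choosing a real Jordan basis $\{b_1,\dots,b_n\}$ of $\Psi(T)$ with $b_1=\varphi'(0)$ up front and then rerunning the proof of Theorem~\ref{theoremmain} with that basis, you instead treat Theorem~\ref{theoremmain} as a black box and afterwards conjugate $\Phi$ by a constant matrix of the form $C_1\oplus I_d$ to align the simple $0$-eigenvector of $R$ with $\varphi'(0)$ and refine the block structure of $R$ to $0\oplus H_1\oplus H_2$ --- this works precisely because $C_1\oplus I_d$ commutes with $I_{n-d}\oplus(-I_d)$, so the twisted periodicity of $Q$ survives, and because hyperbolicity makes $0$ a simple eigenvalue of $R$ confined to the $\R^{n-d}$ summand. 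The second difference is in extracting the equations: the paper projects \eqref{eq:transformation1} onto an explicitly constructed unit normal $\eta(s)$ and a dual frame $\xi(s)$, whereas you invert the full Jacobian $D\Psi(s,{\bf u})=[\dot\varphi(s)+\dot U(s){\bf u}\mid U(s)]$ and read off rows in the $1+(n-d-1)+d$ block pattern; these are the same computation, since the rows of $D\Psi^{-1}$ are exactly the normalized $\eta^T$ and $\xi_j^T$. Your derivations of the twisted equivariances $\Psi(s+T,{\bf u})=\Psi(s,A_d{\bf u})$ and $D\Psi(s+T,{\bf u})^{-1}=(1\oplus A_d)\,D\Psi(s,A_d{\bf u})^{-1}$, and the deductions of properties \ref{propertyeigenvalues}--\ref{propertylambdatilde} from them, are sound and match the paper's conclusions, so the net effect is a slightly more modular packaging of the same proof.
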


Theorem \ref{theoremorbithyperbolic} admits a generalization for the case when $\varphi(t)$ is not hyperbolic. In this case, the equation for $\dot s$ may require a linear term depending on ${\bf v}$.
\begin{mtheorem} \label{theoremorbit}
	Suppose that system $\dot \bz = f(\bz)$ admits a non-trivial $T$-periodic orbit $\varphi(t)$ and let $\Psi(t)$ be the principal fundamental matrix solution of the first variational equation \eqref{eq:variational}. Let $d \in \N$ be the A-index of \eqref{eq:variational} and $q_0$ be the largest $q \in \N$ such that the equation $(\Psi(T)-I_{n})^q \, v=\varphi'(0)$ admits a solution $v \in \R^n$. Then, $q_0\leq n-d-1$ and there is a real matrix function $U:\R \to \R^{n \times(n-1)}$ satisfying
	\[
	 U(t+T) = U(t) \, \left[I_{n-d-1} \oplus (-I_d)\right]
	\]
	such that the change of variables $\bz \to (s,{\bf v}, {\bf w}) \in \R \times \R^{(n-d-1)\times (n-d-1)} \times \R^{d \times d}$ given by
	\[
		\bz = \varphi(s) + U(s) \left[\begin{array}{c}
			{\bf v} \\
			{\bf w}
		\end{array}\right]
	\] 
	transforms \eqref{systempert} into the system
	\begin{equation} \label{eq:systemfinalform}
		\begin{aligned}
			&\dot s = 1 + L  \cdot {\bf v} + \Lambda_0(s,{\bf v},{\bf w}) +  \tilde{\Lambda}_0(t,s,{\bf v},{\bf w}), \\
			&\dot{\bf v} =  H_1\cdot {\bf v} +   \Lambda_1(s,{\bf v},{\bf w}) +  \tilde{\Lambda}_1(t,s,{\bf v},{\bf w}), \\
			&\dot {\bf w} =  H_2\cdot {\bf w} +  \Lambda_2(s,{\bf v},{\bf w}) +  \tilde{\Lambda}_2(t,s,{\bf v},{\bf w}),
		\end{aligned}
	\end{equation}
	where  $H_1 \in \R^{(n-d-1) \times (n-d-1)}$, $H_2 \in \R^{d \times d}$, and $L \in \R^{1 \times (n-d-1)}$. The functions appearing in \eqref{eq:systemfinalform} satisfy the same properties given in Theorem \ref{theoremorbithyperbolic}, as well as the additional property:
	\begin{enumerate}[label=(\roman*)]
		\setcounter{enumi}{5}
		\item If $q_0=0$, then $L=0$. If $1\leq q_0\leq n-d-1$, then $L=\left[L_1 \, L_2 \, \cdots L_{q_0} \, 0 \cdots 0\right]$, where $L_\ell \in \R$ for each $\ell \in \{1,2,\ldots,q_0\}$.
	\end{enumerate}
	\end{mtheorem}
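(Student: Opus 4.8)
\emph{Overall approach, and the bound $q_0\le n-d-1$.} The plan is to manufacture a ``moving frame'' along $\varphi$ out of a Floquet normal form of the first variational equation \eqref{eq:variational} that carries the distinguished solution $\varphi'(t)$ as one of its columns, and then to read \eqref{eq:systemfinalform} off the induced change of coordinates. I would start by recording that, on differentiating $\dot\varphi=f(\varphi)$, the map $\varphi'(t)$ solves \eqref{eq:variational}, so $\varphi'(t)=\Psi(t)\varphi'(0)$ and $\Psi(T)\varphi'(0)=\varphi'(T)=\varphi'(0)$; hence $\varphi'(0)$ is an eigenvector of the monodromy matrix $\Psi(T)$ for the eigenvalue $1$. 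Since $(\Psi(T)-I_n)\varphi'(0)=0$, every Jordan chain of $\Psi(T)$ ending at $\varphi'(0)$ is a genuine chain, and $q_0$ is the length of the longest such chain minus one; it therefore spans a $(q_0+1)$-dimensional subspace of the generalized eigenspace of $\Psi(T)$ for the eigenvalue $1$. As that generalized eigenspace is independent from the generalized eigenspaces of the real negative eigenvalues (whose dimensions add up to at least $d$), we get $(q_0+1)+d\le n$, i.e.\ $q_0\le n-d-1$.

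\emph{The adapted Floquet normal form and the frame.} Next I would apply Theorem \ref{theoremmain} to \eqref{eq:variational}, but refining its proof so that the real basis underlying the construction is chosen with $\varphi'(0)$ as the bottom vector of a length-$(q_0+1)$ Jordan chain sitting at the head of the ``$T$-periodic block''. This should yield $R\in\R^{n\times n}$ and a $C^r$ matrix function $Q:\R\to\R^{n\times n}$ such that $Q(t)e^{tR}$ is a fundamental matrix solution of \eqref{eq:variational}, $Q(t+T)=Q(t)\,[I_{n-d}\oplus(-I_d)]$, the first column of $Q(t)$ is $\varphi'(t)$, $Re_1=0$, and $R$ is block diagonal with respect to its spectral decomposition, the block to which the first index belongs being a single nilpotent Jordan block occupying the indices $1,\dots,q_0+1$; in particular $R=\left[\begin{smallmatrix}0&L&0\\0&H_1&0\\0&0&H_2\end{smallmatrix}\right]$ with $L\in\R^{1\times(n-d-1)}$ supported in its first $q_0$ entries (so $L=0$ when $q_0=0$), $H_1\in\R^{(n-d-1)\times(n-d-1)}$, and $H_2\in\R^{d\times d}$. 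I would then put $U(t):=[\,Q(t)e_2\mid\cdots\mid Q(t)e_n\,]\in\R^{n\times(n-1)}$, so that $U(t+T)=U(t)\,[I_{n-d-1}\oplus(-I_d)]$ and $[\varphi'(t)\mid U(t)]=Q(t)$ is invertible for all $t$. Consequently $\mathcal{G}(s,{\bf v},{\bf w}):=\varphi(s)+U(s)[{\bf v};{\bf w}]$ is, near $\{{\bf v}={\bf w}=0\}$, a $C^r$ local diffeomorphism onto a tubular neighbourhood of the cycle, invariant under $\sigma:(s,{\bf v},{\bf w})\mapsto(s+T,{\bf v},-{\bf w})$ (because $\varphi$ is $T$-periodic and $Q(t+T)=Q(t)[I_{n-d}\oplus(-I_d)]$), so the transformed system is globally well defined in $s$.

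\emph{The transformed system and the verification of the properties.} Then I would differentiate $\bz=\mathcal{G}(s,{\bf v},{\bf w})$ along \eqref{systempert}, Taylor expand $f$ about $\varphi(s)$, insert the column identities coming from $\dot Q=Df(\varphi)Q-QR$ (in particular $\varphi''=Df(\varphi)\varphi'$), multiply by $Q(s)^{-1}$, and solve for $(\dot s,\dot{\bf v},\dot{\bf w})$ on a smaller tube where invertibility persists; this produces precisely \eqref{eq:systemfinalform}, the block structure of $R$ being exactly what forces the linear parts $1+L{\bf v}$, $H_1{\bf v}$, $H_2{\bf w}$ with no cross terms. The $g$-free remainders $\Lambda_\ell$ are then $O(\|({\bf v},{\bf w})\|^2)$ because, for $g\equiv0$, on $\{{\bf v}={\bf w}=0\}$ the system reduces to $\dot s=1$, $\dot{\bf v}=0$, $\dot{\bf w}=0$ with linearization exactly $(1+L{\bf v},H_1{\bf v},H_2{\bf w})$, which is \ref{propertylambda}; the $g$-dependent part $\tilde\Lambda_\ell$ consists of the relevant components of $(D\mathcal{G})^{-1}g(t,\bz)$, hence is linear in $g$ with a bounded coefficient matrix, which is \ref{propertylambdatilde}. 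For \ref{propertyeigenvalues} I would use that the monodromy of $Q(t)e^{tR}$ is $[I_{n-d}\oplus(-I_d)]e^{TR}$, whose eigenvalues have moduli $e^{T\,\mathrm{Re}(\mu)}$ as $\mu$ runs over the spectrum of $R$; removing the first index (an eigenvalue $0$ of $R$, i.e.\ a multiplier $1$) leaves on one side the spectrum of $H_1\oplus H_2$ and on the other the $n-1$ remaining multipliers, which are the eigenvalues of the derivative of the Poincar\'e map. Properties \ref{periodiclambda0} and \ref{periodiclambda2} would follow from the $\sigma$-equivariance of $\mathcal{G}$: since $\mathcal{G}\circ\sigma=\mathcal{G}$ and $\sigma$ does not act on $t$, the transformed field $F(t;\cdot)$ satisfies $F(t;\sigma(\cdot))=\mathrm{diag}(1,I_{n-d-1},-I_d)\,F(t;\cdot)$, and applying this separately to the $g$-free and $g$-linear parts gives exactly the stated relations. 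Finally, property (vi) reads off the shape of $R$: $L=0$ when $q_0=0$, and $L=[L_1\cdots L_{q_0}\,0\cdots 0]$ when $1\le q_0\le n-d-1$.

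\emph{Main obstacle.} I expect the crux to be the adapted Floquet normal form: quoting Theorem \ref{theoremmain} verbatim does not suffice, since the fundamental matrix solution must have $\varphi'(t)$ as a prescribed column with $Re_1=0$ and, simultaneously, $R$ must have the block form above with the Jordan chain through $\varphi'(0)$ contained in the periodic part, which compels me to revisit the construction behind Theorem \ref{theoremmain} and fix a specific basis there rather than treat the statement as a black box. Once the frame is in place, deriving \eqref{eq:systemfinalform} and checking the properties is careful but essentially routine, the only delicate point being to keep the $g$-linear contributions cleanly isolated so that \ref{propertylambdatilde} and the $t$-independence in \ref{periodiclambda0}--\ref{periodiclambda2} come out correctly. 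As a consistency check, Theorem \ref{theoremorbithyperbolic} is the special case in which hyperbolicity of $\varphi$ makes the eigenvalue $1$ of $\Psi(T)$ simple, so that $q_0=0$ and $L=0$.
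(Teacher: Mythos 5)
Your proposal is correct and follows essentially the same route as the paper: choose a real Jordan basis for the monodromy matrix $\Psi(T)$ with $\varphi'(0)$ as the first vector, refine the construction of Theorem~\ref{theoremmain} so that the logarithm $R$ acquires the block form \eqref{eq9} with the eigenvalue-$1$ Jordan block at the top, set $U$ equal to the last $n-1$ columns of $Q$, and project the identity \eqref{eq:transformation1} to extract the transformed system and verify properties (i)--(vi). The only stylistic difference is that the paper carries out the projection via explicitly constructed dual frames $\eta(s)$ and $\xi(s)$ and then checks the (anti)periodicity relations \ref{periodiclambda0}--\ref{periodiclambda2} by direct computation on $F_0$, $G$, $\tilde G$, whereas you package both steps as inverting $D\mathcal{G}$ and invoking the $\sigma$-equivariance $\mathcal{G}\circ\sigma=\mathcal{G}$; the latter is a cleaner way to see where (iii)--(iv) come from, but it encodes exactly the same identities $\eta(s+T)=\pm\eta(s)$, $\xi(s+T)=\xi(s)A_d$, $L_*A_d=L_*$, $HA_d=A_dH$ used in the paper.
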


\section{Preliminary results}
Before presenting the proofs of the results stated in the Introduction, we state some preliminary results that will be needed throughout the remainder of the article. Henceforth, we let $I_n$ denote the $n \times n$ identity matrix.
First, we prove a generalized version of Floquet's Theorem that is more suited to our study.

\begin{theorem}[General Floquet Normal Form] \label{theoremfloquetrepresentation}
	Let $\Phi(t)$ be a fundamental matrix solution of \eqref{systemmain} and $k \in \N$. For each $B \in \mathbb{C}^{n \times n}$ satisfying 
	\begin{equation}\label{eq:floquethypothesis}
		e^{kTB} = (\Phi^{-1}(0) \Phi(T))^k,
	\end{equation}
	there is a $kT$-periodic matrix function $P:\R \to \C^{n \times n}$ such that $\Phi(t) = P(t) e^{tB}$. In particular, the change of variables $\by = P^{-1}(t) \bx$ transforms \eqref{systemmain} into $\dot \by = B \by$.
\end{theorem}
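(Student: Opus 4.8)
The plan is to take the only sensible candidate, $P(t) := \Phi(t)\, e^{-tB}$, and verify it has the stated properties. Since $\Phi(t)$ is a fundamental matrix solution it is invertible for every $t$, and $e^{-tB}$ is always invertible, so $P(t) \in \C^{n\times n}$ is well defined and invertible, and the factorization $\Phi(t) = P(t)\, e^{tB}$ holds by construction. Hence the entire content of the theorem reduces to proving that $P$ is $kT$-periodic, i.e.\ that $P(t+kT) = P(t)$ for all $t \in \R$.

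The first step is to record the basic periodicity property of $\Phi$. Because $A$ is $T$-periodic, the matrix function $t \mapsto \Phi(t+T)$ solves the same matrix differential equation $\dot X = A(t) X$ as $\Phi$; comparing two fundamental matrix solutions, one gets $\Phi(t+T) = \Phi(t)\, C$ for all $t \in \R$, where the constant matrix $C := \Phi^{-1}(0)\Phi(T)$ is a monodromy matrix of \eqref{systemmain} (evaluate the identity at $t=0$). A straightforward induction on $j$ then gives $\Phi(t+jT) = \Phi(t)\, C^{j}$ for every $j \in \N$ and every $t \in \R$; in particular $\Phi(t+kT) = \Phi(t)\, C^{k}$.

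Combining this with the hypothesis $e^{kTB} = C^{k}$ then closes the argument: for all $t \in \R$,
\[
P(t+kT) = \Phi(t+kT)\, e^{-(t+kT)B} = \Phi(t)\, C^{k}\, e^{-kTB}\, e^{-tB} = \Phi(t)\, e^{kTB}\, e^{-kTB}\, e^{-tB} = \Phi(t)\, e^{-tB} = P(t),
\]
using that $e^{kTB} e^{-kTB} = I_n$. Thus $P$ is $kT$-periodic. Finally, for the last assertion, I would differentiate $\Phi(t) = P(t) e^{tB}$ and use $\dot\Phi = A(t)\Phi$ to obtain $\dot P(t) = A(t) P(t) - P(t) B$; substituting $\bx = P(t)\by$ into \eqref{systemmain} and cancelling the invertible factor $P(t)$ then yields $\dot\by = B\by$.

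I do not anticipate a serious obstacle: the argument is a direct adaptation of the classical proof of Floquet's Theorem. The one point that requires care is that we are \emph{not} assuming the one-step identity $e^{TB} = C$ (which need not admit a convenient, e.g.\ real, solution $B$), but only its $k$-th power $e^{kTB} = C^{k}$; consequently the iteration $\Phi(t+kT) = \Phi(t) C^{k}$ and the cancellation $C^{k} e^{-kTB} = I_n$ must be performed at the level of the $k$-th iterate rather than deduced from a relation valid after a single period.
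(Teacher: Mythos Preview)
Your proof is correct and follows essentially the same approach as the paper: define $P(t):=\Phi(t)e^{-tB}$, use $\Phi(t+T)=\Phi(t)C$ with $C=\Phi^{-1}(0)\Phi(T)$ to get $\Phi(t+kT)=\Phi(t)C^k$, and then invoke the hypothesis $e^{kTB}=C^k$ to conclude $kT$-periodicity of $P$. Your additional explicit verification of the change-of-variables claim is a nice touch that the paper leaves implicit.
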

\begin{proof}
	Define $C:=  \Phi^{-1}(0) \Phi(T)$ and observe that $t\mapsto \Phi(t) C$ and $t \mapsto \Phi(t+T)$ are solutions of the same initial value problem, so that $\Phi(t+T) = \Phi(t) C$. Hence, for each $k \in \N$, 
	\[
		\Phi(t+kT) = \Phi(t) C^k.
	\]
	Let $B \in \C^{n \times n}$ satisfy \eqref{eq:floquethypothesis}. The proof is concluded by defining $P(t):=\Phi(t) e^{-tB}$, since it is straightforward to verify that $P$ is $kT$-periodic.
\end{proof}
Observe that we can retrieve Theorem \ref{theoremfloquet} from Theorem \ref{theoremfloquetrepresentation} by taking $k=1$ and considering that every invertible complex matrix has a, possibly complex, matrix logarithm.

We now state a known result concerning the existence of a real logarithm of a matrix (for a proof, see \cite{culver}).
\begin{lemma} \label{lemmareallog}
	Let $C$ be a square real matrix. There exists a real solution $X$ to the equation $C=e^X$ if, and only if, $C$ is nonsingular and each Jordan block of the Jordan normal form of $C$ associated to a real negative eigenvalue occurs an even number of times.
\end{lemma}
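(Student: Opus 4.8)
The plan is to establish the two implications separately: necessity follows from a description of the Jordan structure of a matrix exponential, while sufficiency is a reduction to the real Jordan canonical form together with an explicit block-by-block construction. In both directions the real negative eigenvalues are the delicate case; the rest is essentially bookkeeping.

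\textbf{Necessity.} That $C$ must be nonsingular is immediate, since $\det e^{X}=e^{\operatorname{tr}X}\neq 0$. For the Jordan condition, the key remark is that for a single Jordan block $J=\mu I_{m}+N$ (with $N$ the nilpotent shift) one has $e^{J}=e^{\mu}\exp(N)$ and $\exp(N)-I_{m}=N\,(I_{m}+\tfrac12N+\cdots)$, where the second factor is invertible and commutes with $N$; hence $\exp(N)-I_{m}$ has the same rank as every power $N^{k}$, so $e^{J}$ is similar to a single Jordan block of size $m$ with eigenvalue $e^{\mu}$. Since $e^{X}$ is a polynomial in $X$, it preserves each generalized eigenspace $G_{\mu}(X)$, and one checks that $G_{\lambda}(e^{X})=\bigoplus_{e^{\mu}=\lambda}G_{\mu}(X)$; applying the previous remark block by block shows that the Jordan blocks of $e^{X}$ at $\lambda$ are in size-preserving bijection with those of $X$ over all $\mu$ with $e^{\mu}=\lambda$. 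Now suppose $X$ is real and $\lambda<0$: every $\mu$ with $e^{\mu}=\lambda$ satisfies $\operatorname{Re}\mu=\ln|\lambda|$ and $\operatorname{Im}\mu\in\pi+2\pi\Z$, hence is non-real, and $\overline{\mu}$ is also such an eigenvalue with the same Jordan data. Thus the multiset of block sizes of $X$ over $\{\mu:e^{\mu}=\lambda\}$ is invariant under the fixed-point-free involution $\mu\mapsto\overline{\mu}$, so every size occurs an even number of times; by the bijection the same holds for $C=e^{X}$ at $\lambda$.

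\textbf{Sufficiency.} Using the primary decomposition over $\R$, write $\R^{n}$ as a direct sum of $C$-invariant real subspaces of three kinds: generalized eigenspaces $E_{\lambda}$ for real $\lambda>0$, generalized eigenspaces $E_{\lambda}$ for real $\lambda<0$, and real subspaces $W$ whose complexification is $E_{\mu}\oplus E_{\overline{\mu}}$ for a non-real eigenvalue $\mu$. It suffices to exhibit a real logarithm of $C$ restricted to each summand. On $E_{\lambda}$ with $\lambda>0$ we have $C=\lambda I+M$ with $M$ real nilpotent, and $Y:=(\ln\lambda)I+\log(I+M/\lambda)$ — a real polynomial in $M$ — satisfies $e^{Y}=\lambda I+M$. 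On $W$, over $\C$ the restriction $C|_{E_{\mu}}=\mu I+M$ has the complex logarithm $Y:=(\operatorname{Log}\mu)I+\log(I+M/\mu)$ for the principal branch, and then $\overline{Y}$ is a logarithm of $C|_{E_{\overline{\mu}}}$; hence $Y\oplus\overline{Y}$ is a complex logarithm of $C|_{E_{\mu}\oplus E_{\overline{\mu}}}$ commuting with complex conjugation, so it is conjugate — by the same real change of basis identifying $E_{\mu}\oplus E_{\overline{\mu}}$ with the complexification of $W$ — to a real matrix $X$ with $e^{X}=C|_{W}$.

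\textbf{The crux: negative real eigenvalues.} Fix $\lambda<0$, choose a real basis putting $C|_{E_{\lambda}}$ in Jordan form $\bigoplus_{i}J_{m_{i}}$ with $J_{m}=\lambda I_{m}+N_{m}$, and use the hypothesis to group the sizes $m_{i}$ into pairs of equal value; it then suffices to find a real logarithm of $J_{m}\oplus J_{m}$. Put $Y_{R}:=(\ln|\lambda|)I_{m}+\log(I_{m}+N_{m}/\lambda)$, a real matrix, and $Y:=Y_{R}+i\pi I_{m}$, so that $e^{Y}=(|\lambda|e^{i\pi})(I_{m}+N_{m}/\lambda)=\lambda I_{m}+N_{m}=J_{m}$, which is real. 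Applying the unital $\R$-algebra embedding $\rho:\C^{m\times m}\to\R^{2m\times 2m}$, $\rho(A+iB)=\left[\begin{array}{cc}A&-B\\B&A\end{array}\right]$, which commutes with $\exp$, gives
\[
\exp\left[\begin{array}{cc}Y_{R}&-\pi I_{m}\\ \pi I_{m}&Y_{R}\end{array}\right]=\rho(e^{Y})=\rho(J_{m})=J_{m}\oplus J_{m},
\]
so the displayed real matrix is the required logarithm; undoing the real similarities assembles these into a real logarithm of $C$. The main obstacle is exactly this step: one has to recognize that the even-multiplicity hypothesis is precisely what permits the pairing, and then find the realification device that converts the imaginary shift $i\pi$ — which is forced, since $e^{Y}$ must land on a negative eigenvalue — into an honest real matrix. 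The remaining care needed is the existence of the real Jordan form with a real transforming matrix and the compatibility of the principal logarithm with complex conjugation on the complex-pair summands.
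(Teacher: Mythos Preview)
The paper does not supply a proof of this lemma at all: it simply records the statement and refers the reader to Culver's paper for the argument. So there is no in-paper proof to compare against; your write-up is a genuine addition rather than a reproduction.

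Your argument is correct in both directions and follows the classical route. Necessity is handled by the standard facts that $e^{J_m(\mu)}$ is similar to a single Jordan block $J_m(e^{\mu})$ and that the generalized eigenspace $G_\lambda(e^{X})$ splits as $\bigoplus_{e^{\mu}=\lambda}G_\mu(X)$; for $\lambda<0$ the preimages $\mu$ are non-real and come in conjugate pairs, which forces the even multiplicity. Sufficiency is the primary decomposition over $\R$ together with a block-by-block construction; the only non-routine step is the negative-eigenvalue case, and your realification trick---embedding $\C^{m\times m}$ into $\R^{2m\times 2m}$ so that the complex logarithm $Y_R+i\pi I_m$ of $J_m$ becomes a real logarithm of $J_m\oplus J_m$---is exactly the right device and is carried out cleanly. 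One small check worth making explicit is that $\log(I_m+N_m/\lambda)$ is indeed real because $\lambda$ is real and the series has real coefficients, so $Y_R$ is real; you use this but do not quite say it. The phrase ``$\overline{Y}$'' in the complex-pair case is slightly informal---what you mean is the conjugate operator $\sigma Y\sigma$ on $E_{\overline{\mu}}=\sigma(E_\mu)$---but the intent is clear and the conclusion is correct.
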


Finally, we present a result concerning the logarithm of a matrix in the Jordan normal form (for a proof, see \cite[Chapter VIII, Section 8]{gantmacher}).
\begin{lemma} \label{lemmalogmatrix}
	Let $J=J_1 \oplus \cdots \oplus J_k$ be a matrix in the Jordan normal form, where $J_\ell$ is a Jordan block associated to the eigenvalue $\lambda_\ell$, and, for each $\lambda \in \C$, let $\log \lambda$ denote any of the possible values $\xi \in \C$ satisfying $e^\xi = \lambda$. Then, a solution to the matrix equation $e^X=J$ is given by $X= \log J := \log J_1 \oplus \cdots \oplus \log J_k$, where, for each $(m+1)$-dimensional Jordan block $J_\ell$, $1\leq \ell \leq k$, we let $N$ be the $(m+1)$-dimensional nilpotent square matrix having the element 1 in each position of the superdiagonal, that is, $N=(\delta_{i+1,j})_{1\leq i,j\leq m+1}$, and define
	\[
		\log J_\ell := I_{m+1} \log \lambda_\ell  + N \frac{1}{\lambda_\ell} + N^2 \left(\frac{-1}{2 \lambda_\ell^2}\right) +\cdots + N^m \frac{(-1)^{m-1}}{m \lambda^m}.
	\]
\end{lemma}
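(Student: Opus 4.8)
The plan is to reduce the statement to a single Jordan block and then to a standard power-series identity. Since the matrix exponential respects block-diagonal decompositions, $e^{A_1\oplus\cdots\oplus A_k}=e^{A_1}\oplus\cdots\oplus e^{A_k}$, it suffices to prove that $e^{\log J_\ell}=J_\ell$ for each block $J_\ell$ separately, as the blocks can be reassembled at the end to give $e^{\log J}=J$. So fix a Jordan block $J_\ell$ of size $m+1$ and write $\lambda:=\lambda_\ell$ for its eigenvalue; note that $\lambda\neq0$, since otherwise $J$ would be singular and $e^X=J$ could have no solution, and in any case the formula proposed for $\log J_\ell$ already involves $\lambda^{-1}$. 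Let $N$ be the nilpotent superdiagonal matrix of the statement, so that $J_\ell=\lambda I_{m+1}+N=\lambda\big(I_{m+1}+\lambda^{-1}N\big)$. Term by term, the proposed logarithm equals
\[
\log J_\ell=(\log\lambda)\,I_{m+1}+L,\qquad L:=\sum_{j=1}^{m}\frac{(-1)^{j-1}}{j}\big(\lambda^{-1}N\big)^{j},
\]
and $L$, being a polynomial in $N$ with zero constant term, is nilpotent.

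Because $(\log\lambda)I_{m+1}$ is scalar it commutes with $L$, so $e^{\log J_\ell}=e^{(\log\lambda)I_{m+1}}e^{L}=\lambda\,e^{L}$. Hence the whole statement reduces to the single identity $e^{L}=I_{m+1}+\lambda^{-1}N$, that is, to $e^{\log(I+M)}=I+M$ for the nilpotent matrix $M:=\lambda^{-1}N$, with $\log(I+M):=\sum_{j=1}^{m}\frac{(-1)^{j-1}}{j}M^{j}$. I would prove this by invoking the formal-power-series identity $\exp\big(\log(1+x)\big)=1+x$ in $\C[[x]]$, which holds because $\exp$ and $x\mapsto\log(1+x)$ are mutually inverse as formal power series. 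Since $M$ is nilpotent, substituting $M$ for $x$ turns each series into a finite sum and is compatible with composition of series (the inner series has zero constant term, so its image $L$ is nilpotent and $\exp$ may be evaluated at it); evaluating both sides of the formal identity at $x=M$ therefore gives $e^{L}=I+M$, and consequently $e^{\log J_\ell}=\lambda\big(I_{m+1}+\lambda^{-1}N\big)=J_\ell$.

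Reassembling, $e^{\log J}=e^{\log J_1}\oplus\cdots\oplus e^{\log J_k}=J_1\oplus\cdots\oplus J_k=J$, as claimed. I expect the only step needing genuine care to be this middle one: one must check that ``evaluate the composed series $\exp\circ\log(1+\,\cdot\,)$ at $M$'' agrees with ``evaluate $\exp$ at the matrix $\log(I+M)$'', which is precisely where the nilpotency of $N$ (hence of $M$ and of $L$) enters — all series truncate, so there is no convergence issue and the substitution is the standard formal one. Should one wish to avoid formal series, an alternative is the direct route: expand $e^{L}=\sum_{p\ge0}L^{p}/p!$, collect the coefficient of $N^{q}$ for each $q\ge0$, and verify it equals the coefficient of $N^{q}$ in $I_{m+1}+\lambda^{-1}N$; this is just $\exp(\log(1+x))=1+x$ read coefficient by coefficient and is an easy induction on $q$, though more computational than the formal-series argument.
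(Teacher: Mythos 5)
The paper does not give its own proof of this lemma; it simply cites Gantmacher (Chapter VIII, Section 8) as a reference, so there is no in-paper argument to compare against. Your proof is correct and is the standard one: reduce to a single Jordan block via $e^{A_1\oplus\cdots\oplus A_k}=e^{A_1}\oplus\cdots\oplus e^{A_k}$, split off the commuting scalar $(\log\lambda)I$ so that the problem becomes $e^L=I+\lambda^{-1}N$ with $L$ the truncated $\log(1+x)$ series in the nilpotent $M=\lambda^{-1}N$, and then invoke the formal-power-series identity $\exp(\log(1+x))=1+x$ evaluated at $M$. The point you flag as needing care — that substituting a nilpotent matrix into the composed formal series $\exp\circ\log(1+\cdot)$ agrees with evaluating $\exp$ at the matrix $L=\log(I+M)$ — is indeed the crux, and your justification (the inner series has zero constant term, so $L$ is nilpotent, all series truncate, and formal substitution applies with no convergence issue) is exactly right. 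The alternative coefficient-by-coefficient check you sketch at the end is also valid. I see no gaps.
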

\section{Proof of Theorem \ref{theoremmain}} \label{sec:proofoftheoremmain}
Let $\Psi(t)$ be a fundamental matrix solution of $\eqref{systemmain}$. Then, for each nonsingular matrix $S \in \R^{n \times n}$, the product $\Psi(t) S$ is also a fundamental matrix solution of $\eqref{systemmain}$.

Let $S \in \R^{n \times n}$ be such that $S^{-1} \Psi^{-1}(0) \Psi(T) S$ is in the real Jordan canonical form. By changing order of blocks in the canonical form, $S$ can be chosen so that
\[
S^{-1} \Psi^{-1}(0) \Psi(T) S = \mathcal{J}_1 \oplus \mathcal{J}_2,
\]
where $\mathcal{J}_1 \in \R^{(n-d) \times (n-d)}$ and $\mathcal{J}_2 \in \R^{d \times d}$ are matrices in the real Jordan canonical form satisfying the following condition: each Jordan block of $\mathcal{J}_1$ associated to a real negative eigenvalue occurs an even number of times, and every Jordan block of $\mathcal{J}_2$ is associated to a real negative eigenvalue and occurs only once in this matrix.
Then, the fundamental matrix solution $\Phi(t):= \Psi(t) S$ satisfies
\begin{equation}
	\Phi^{-1}(0) \Phi(T) = \mathcal{J}_1 \oplus \mathcal{J}_2.
\end{equation}

By Lemma \ref{lemmareallog}, there is a real matrix $R_1 \in \R^{(n-d) \times (n-d)}$ such that $e^{TR_1} = \mathcal{J}_1$. Furthemore, by considering Lemma \ref{lemmalogmatrix} and the fact that every eigenvalue of $\mathcal{J}_2$ is negative and real, it follows that there exists $R_2 \in \R^{d \times d}$ such that $e^{T R_2 + i\pi I_d}= \mathcal{J}_2$.

Define the matrices
\[\tilde{R}:= R_1 \oplus \left(R_2+i\frac{\pi}{T}I_d\right) \qquad \text{and} \qquad R:= R_1 \oplus R_2. \]
It is easy to see that $e^{T \tilde{R}} = \Phi^{-1}(0) \Phi(T)$ and $e^{2T R} = e^{2T\tilde{R}} =  (\Phi^{-1}(0) \Phi(T))^2$. Thus, Theorem \ref{theoremfloquetrepresentation} ensures that there are a $T$-periodic matrix function $t \mapsto P(t) \in \mathbb{C}^{n \times n}$ and a $2T$-periodic matrix function $t\mapsto Q(t) \in \mathbb{C}^{n \times n}$ such that 
\[\Phi(t)= P(t) e^{t\tilde{R}} = Q(t) e^{tR}.\]
Observe, since $\Phi(t) e^{-tR} \in \R^{n \times n}$, it follows that $Q(t) \in \R^{n \times n}$. Considering also that $R$ and $\tilde{R}$ clearly commute, it follows that \[Q(t+T) = P(t+T) e^{(t+T)\tilde{R}} e^{-(t+T)R} = \Phi(t) e^{-tR} e^{T(\tilde{R}-R)} = Q(t) \left[I_{n-d} \oplus (-I_d)\right],\] 
for all $t \in \R$. This concludes the proof.

\section{Proofs of Theorems \ref{theoremorbithyperbolic} and \ref{theoremorbit}}
We will prove the more general Theorem \ref{theoremorbit}, since the only difference in the proof of Theorem \ref{theoremorbithyperbolic} is that the hyperbolicity of the periodic orbit $\varphi(t)$ ensures that the matrix $L$ appearing in \eqref{eq:systemfinalform} vanishes.

Let $d$ and $q_0$ be as in the statements of Theorems \ref{theoremorbithyperbolic} and \ref{theoremorbit}. Also, let $\Psi(t)$ be the principal fundamental matrix solution of \eqref{eq:variational}. Then, $\varphi'(0)$ is an eigenvector of the monodromy matrix $\Psi^{-1}(0) \Psi(T)=\Psi(T)$ associated to the eigenvalue $1$. Let $\beta=\{b_1,\ldots,b_n\}$ be a real Jordan basis for the operator $\Psi(T)$ such that $b_1=\varphi'(0)$. Hence, if $S$ is the matrix whose $k$-th column is given by $b_k$, then $$S^{-1} \Psi^{-1}(0) \Psi(T) S = \mathcal{J}_\varphi \oplus \mathcal{J}, $$
where $\mathcal{J}_\varphi \in \R^{n_0 \times n_0}$ is a Jordan block with eigenvalue $1$ and $\mathcal{J}$ is in the real Jordan normal form. If $\varphi(t)$ is hyperbolic, $\mathcal{J}_\varphi$ is the matrix with a single element given by $1$.

Define the fundamental matrix solution $\Phi(t)$ of \eqref{eq:variational} by $\Phi(t):=\Psi(t) S$. In this case, $\Phi^{-1}(0)\Phi(T)= \mathcal{J}_\varphi \oplus \mathcal{J}$. Let $\{1,\lambda_1,\ldots,\lambda_{n-1}\}$ be the eigenvalues (counting multiplicities) of the monodromy matrix $\Phi^{-1}(0)\Phi(T)$.

Observe that, since $S {\bf e}_{\ell} = b_{\ell}$ for all $\ell \in \{1,\ldots,n\}$, we have that $$\left[S^{-1}( \Psi^{-1}(0) \Psi(T)  - I_n)S\right]^{n_0-1} {\bf e}_{n_0}= S^{-1} (\Psi(T) - I_n)^{n_0-1} \, b_{n_0} = {\bf e}_1.$$
Thus, it follows that $(\Psi(T) - I_n)^{n_0-1} \, b_{n_0}=b_1=\varphi'(0)$, so that $q_0\geq n_0-1$. Moreover, since $\beta$ is a Jordan basis, $n_0-1$ is the largest $k \in \N$ for which $(\Psi(T) - I_n)^k \, v = b_1$ has a solution $v \in \R^n$. Hence, $n_0=q_0+1$. 

Choose the vectors $b_{n_0+1},\ldots,b_n$ of $\beta$ such that
$$ \mathcal{J} = \mathcal{J}_1 \oplus \mathcal{J}_2,$$ with $\mathcal{J}_1 \in \R^{n_1 \times n_1}$ and $\mathcal{J}_2 \in \R^{n_2 \times n_2}$ in the real Jordan canonical form satisfying the following condition: each Jordan block of $\mathcal{J}_1$ associated to a real negative eigenvalue occurs an even number of times, and every Jordan block of $\mathcal{J}_2$ is associated to a real negative eigenvalue and occurs only once. It is then clear that $n_2=d$. Thus, since $n_0+ n_1+n_2=n$, it follows that $n_1=n-d-q_0-1$.

By Lemma \ref{lemmareallog}, there are $R_0 \in \R^{n_0 \times n_0}$ and $R_1 \in \R^{n_1 \times n_1}$ such that $e^{TR_0}=\mathcal{J}_0$ and $e^{TR_1} = \mathcal{J}_1$. Furthemore, by considering Lemma \ref{lemmalogmatrix} and the fact that every eigenvalue of $\mathcal{J}_2$ is negative and real, it follows that there exists $R_2 \in \R^{n_2 \times n_2}$ such that $e^{T R_2 + i\pi I_d}= \mathcal{J}_2$.

Define the matrices
\[\tilde{R}:= R_0 \oplus R_1 \oplus \left(R_2+i\frac{\pi}{T}I_{n_2}\right) \qquad \text{and} \qquad R:= R_0 \oplus R_1 \oplus R_2. \]
Let $\{\mu_0,\ldots,\mu_{n-1}\}$ be the eigenvalues (counting multiplicities) of $R$. Since it is clear that $e^{2TR} = (\Phi^{-1}(0) \Phi(T))^2$, it follows that $e^{2T\mu_0} = 1$ and $e^{2T\mu_k} = \lambda_k^2$ for $k=1,\ldots,n-1$. Therefore, $\mu_0=0$ and 
\begin{equation}\label{eq:eigenvalues}
	\text{Re}(\mu_k) = \frac{1}{2T} \text{Re} \left(\log \lambda_k^2\right) = \frac{\log |\lambda_k|}{T}
\end{equation}
for $k =1,\ldots,n-1$.

Proceeding exactly as in the proof of Theorem \ref{theoremmain}, we obtain $Q:\R \to \R^{n \times n}$ such that $\Phi(t)=Q(t)e^{TR}$ and \[Q(t+T) = Q(t) \left[I_{n-d} \oplus (-I_{d})\right],\] 
for all $t \in \R$. Observe that the first column $c_1(t)$ of $\Phi(t)=\Psi(t) S$ is a solution of \eqref{eq:variational} satisfying $c_1(0)=\varphi'(0)$. Therefore, since $\varphi'(t)$ is a solution of \eqref{eq:variational}, it follows that $c_1(t)=\varphi'(t)$. Since the first column of $R$ is zero, we conclude that the first column of $Q(t)=\Phi(t) e^{-tR}$ is given by $\varphi'(t)$.

Observe that $R$ is of the form 
\begin{equation}\label{eq9}
	R = \left[\begin{array}{c|ccc}
		0 &   \multicolumn{3}{c}{L_{*}}\\
		\hline \\[-1.5\normalbaselineskip]
		0 &  &  &  \\
		\vdots &\multicolumn{3}{c}{\smash{\raisebox{.0\normalbaselineskip}{$\text{\huge$ H $}$}}}  \\
		0 &  &  & 
	\end{array}\right],
\end{equation}
where $L_{*} \in \R^{1 \times (n-1)}$ vanishes if $q_0=0$ and there are $L_1,\ldots,L_{q_0} \in \R$ such that $L_{*}=[L_1 \ldots L_{q_0} \, 0 \ldots 0]$ if $q_0\geq1$. Moreover, if $R_0^{1,1}$ is the submatrix of $R_0$ obtained by removing its first row and its first column, then $H=H_1\oplus H_2$, where $H_1:=R_0^{1,1} \oplus R_1$ and $H_2:=R_2$. As we will prove below, $H_1$ and $H_2$ are the matrices appearing in the transformed system presented in the statement of the Theorem.  Then, considering that \eqref{eq9} ensures that the characteristic polynomials $p_R$ and $p_H$ of $R$ and $H$ satisfy the identity $p_R(x) = x p_H(x)$, property (i) follows from \eqref{eq:eigenvalues}.

since $H$ is upper-triangular and its diagonal elements are, with the exception of one count of zero, the eigenvalues (counting multiplicities) of $R$, property \ref{propertyeigenvalues} follows from \eqref{eq:eigenvalues}.

Let $U(t)$ be the $n \times (n-1)$ matrix whose columns are given by the last $n-1$ columns of $Q(t)$. It is then clear that 
\begin{equation}\label{eq:Uperiod}
	U(t+T) = U(t) \, A_d,
\end{equation}
 where $A_d:= I_{n-d-1} \oplus (-I_d)$. Moreover, since $\Phi(t)$ is a solution of \eqref{eq:variational}, it follows that $Q'(t)+ Q(t)R = Df(\varphi(t))Q(t)$. Restricting this equality to the last $n-1$ columns, we obtain
\begin{equation}\label{eq:identityU}
	U'(t) + \varphi'(t) L_* + U(t) H = Df(\varphi(t)) U(t)
\end{equation}
for all $t \in \R$.

Consider the transformation of variables 
\[
\bz = \varphi(s) + U(s) \left[\begin{array}{c}
	{\bf v} \\
	{\bf w}
\end{array}\right] =\varphi(s) + U(s) {\bf h},
\]
where ${\bf h}:= ({\bf v},{\bf w})$. For convenience, we will adopt the notation ${\bf h} = ({\bf v},{\bf w})$ for the rest of the proof. Observe that, under this transformation, \eqref{systempert} becomes
\begin{equation} \label{eq:transformation1}
	\left[\varphi'(s) + U'(s) {\bf h} \;\vline \; U(s) \right] \left[\begin{array}{c}
		\dot s \\
		\dot {\bf h}
	\end{array}\right] = f(\varphi(s)+U(s){\bf h}) + g(t,\varphi(s)+U(s){\bf h}).
\end{equation}
Since the coefficient matrix appearing in left-hand side becomes the invertible matrix $Q(s)$ when ${\bf h}=0$, it follows that, for $\|{\bf h}\|$ small, \eqref{eq:transformation1} can be solved for $(\dot s , \,\dot {\bf h})$. 

We set out to prove properties \ref{propertylambda} to \ref{propertylambdatilde}. In order to do so, we must find explicit equations for $\dot s$ and $\dot {\bf h}$. Let $\eta(s) \in \R^n$ be the unitary vector that is orthogonal to each of the $n-1$ columns of the matrix $U(s)$. Then, if $\eta(s)^T$ denotes the transpose of the column vector $\eta(s)$, it follows that $\eta(s)^T \cdot U(s) = 0$ and $\eta(s)^T \cdot \eta(s) = 1$ for all $s \in \R$. Observe that, since $U$ is of class $C^r$, so is $\eta$. Moreover, since $U$ satisfies \eqref{eq:Uperiod}, it follows that $\eta(s)$ is either $T$-periodic or $T$-antiperiodic. Then, by projecting \eqref{eq:transformation1} onto $\eta(s)$, we obtain
\begin{equation}\label{eq:sdot}
	\dot s = \frac{\eta(s)^T \cdot f(\varphi(s) +U(s) {\bf h})}{\eta(s)^T \cdot (\varphi'(s) + U'(s) {\bf h})} + \frac{\eta(s)^T \cdot g(t,\varphi(s) +U(s) {\bf h})}{\eta(s)^T \cdot (\varphi'(s) + U'(s) {\bf h})}.
\end{equation}

Having found an explicit form of the equation for $\dot s$, we can prove the properties concerning functions $\Lambda_0$ and $\tilde{\Lambda}_0$ appearing in the statement of the Theorem. Let $F_0(s,{\bf h})$ and $\tilde{\Lambda}_0(t,s,{\bf h})$ be defined by
\[
	F_0(s,{\bf h}): = \frac{\eta(s)^T \cdot f(\varphi(s) +U(s) {\bf h})}{\eta(s)^T \cdot (\varphi'(s) + U'(s) {\bf h})}, \qquad \tilde{\Lambda}_0(t,s,{\bf h}):= \frac{\eta(s)^T \cdot g(t,\varphi(s) +U(s) {\bf h})}{\eta(s)^T \cdot (\varphi'(s) + U'(s) {\bf h})}.
\]
Then, it is clear that property \ref{propertylambdatilde} holds for $\ell =0$, because $\|{\bf h}\|$ is small and $\eta(s)^T \cdot \varphi'(s)>0$ is $2T$-periodic. It is also easy to see that $F_0(s,0)= 1$. Moreover, by considering \eqref{eq:identityU}, it follows that
\[
	\frac{\partial F_0}{\partial {\bf h}}(s,0) = \frac{\eta(s)^T\cdot \left(Df(\varphi(s))U(s)-U'(s)\right)}{\eta^T(s) \cdot \varphi'(s)} = L_*.
\] 
By defining $\Lambda_0(s,{\bf h}):= F_0(s,{\bf h})-1-L_*{\bf h}$, we conclude that \eqref{eq:sdot} becomes $$\dot s = 1+L_*{\bf h} + \Lambda_0(s,{\bf h}) + \tilde{\Lambda}_0(t,s,{\bf h}).$$ Since $q_0 = n_0-1 \leq n_0+n_1-1 = n-d-1$, it follows that $L_*{\bf h} = L{\bf v}$, where $L \in \R^{1 \times (n-d-1)}$ is such that $L_*=[L \,|\, 0 \cdots 0]$. It is clear from the definition of $\Lambda_0$ that property \ref{propertylambda} holds for $\ell=0$. Moreover, property \ref{periodiclambda0} for $\ell=0$ is a straightforward consequence of the identity $L_* {\bf h} = L_* A_d {\bf h}$ combined with the fact that $F_0(s+T,{\bf h}) = F_0(s,A_d {\bf h})$, because $\eta(s)$ is either $T$-periodic or $T$-antiperiodic.

Let $U_k(s)$ be the $k$-th column of $U(s)$. For each $j \in \{1,\ldots,n-1\}$, we proceed as when choosing $\eta(s)$ and find $\xi_j(s) \in \R^n$ such that $\xi_j(s)^T \cdot U_k(s) = \delta_{kj}$ and $\xi_j(s)^T \cdot \varphi'(s)=0$, where $\delta_{kj}$ is the Kronecker delta. Once again, it is clear that each $\xi_j(s)$ is of class $C^r$. Define $\xi(s) \in \R^{n \times (n-1)}$ as the matrix whose columns are given by $\xi_j(s)$. It is then easy to verify that $$\xi(s+T)= \xi(s)A_d.$$ By projecting \eqref{eq:transformation1} onto $\xi(s)$, we obtain:
\begin{equation}\label{eq:hdot}
	\dot {\bf h} = G(s,{\bf h}) + \tilde{G}(t,s,{\bf h}),
\end{equation}
where 
\[
	G(s,{\bf h}) := \xi(s)^T \Big(f\big(\varphi(s)+U(s){\bf h}\big) - U'(s)\,{\bf h}\big(1+L_*{\bf h}+\Lambda_0(s,{\bf h})\big)\Big)
\]
and
\[
	 \tilde{G}(t,s,{\bf h}) := \xi(s)^T\Big(g(t,\varphi(s)+U(s){\bf h})  - U'(s) {\bf h}\, \tilde{\Lambda}_0(t,s,{\bf h})\Big).
\]

We can now prove the properties stated in the Theorem concerning $\Lambda_1$, $\Lambda_2$, $\tilde{\Lambda}_1$, and $\tilde{\Lambda}_2$. Observe that $G(s,0)=0$, because $f(\varphi(s))=\varphi'(s)$. Moreover, 
\[
	\frac{\partial G}{\partial {\bf h}}(s,0)= \xi(s)^T \left(Df(\varphi(s)) U(s) - U'(s)\right) = \xi(s)^T \left(\varphi'(s) L_* + U(s) H\right) = H.
\]
By recalling that ${\bf h} = ({\bf v}, {\bf w})$ and $H=H_1 \oplus H_2$, we define 
\[
	\left[\begin{array}{c}
		\Lambda_1(s,{\bf h}) \\
		\Lambda_2(s,{\bf h})
	\end{array}\right] := G(s,{\bf h}) - H{\bf h} , \quad \left[\begin{array}{c}
\tilde{\Lambda}_1(t,s,{\bf h}) \\
\tilde{\Lambda}_2(t,s,{\bf h})
\end{array}\right] := \tilde{G}(t,s,{\bf h}),
\]
and it follows that \eqref{eq:hdot} becomes
\[	
	\begin{aligned}
	&\dot{\bf v} =  H_1\cdot {\bf v} +   \Lambda_1(s,{\bf v},{\bf w}) +  \tilde{\Lambda}_1(t,s,{\bf v},{\bf w}), \\
	&\dot {\bf w} =  H_2\cdot {\bf w} +  \Lambda_2(s,{\bf v},{\bf w}) +  \tilde{\Lambda}_2(t,s,{\bf v},{\bf w}).
	\end{aligned}
\]
Considering that $G(s,0)=0$ and $\frac{\partial G}{\partial {\bf h}}(s,0)=H$, it is easy to see that property \ref{propertylambda} holds for $\ell=1$ and $\ell=2$. Furthermore, since $\xi(s)^T$ is $2T$-periodic, it follows from the definition of $\tilde{G}$ that property \ref{propertylambdatilde} holds for $\ell=1$ and $\ell=2$.

Since $L_*{\bf h} = L_* A_d {\bf h}$ for every ${\bf h} \in \R^{n-1}$, and considering that $\xi(s+T)= \xi(s)A_d$, we conclude that $$G(s+T,{\bf h}) = A_dG(s,A_d{\bf h}).$$ Also, since $H=H_1\oplus H_2$, where $H_1 \in \R^{(n-d-1) \times (n-d-1)}$ and $H_2 \in \R^{d \times d}$, it is easy to see that $H$ and $A_d$ commute. Hence, $H{\bf h} = A_d H A_d {\bf h}$. Thus, it follows that
\[
\left[\begin{array}{c}
	\Lambda_1(s+T,{\bf h}) \\
	\Lambda_2(s+T,{\bf h})
\end{array}\right] := G(s+T,{\bf h}) - H{\bf h} = A_d \left(G(s,A_d{\bf h}) - H A_d {\bf h}\right) = A_d \left[\begin{array}{c}
\Lambda_1(s,A_d{\bf h}) \\
\Lambda_2(s,A_d{\bf h})
\end{array}\right].
\]
It is then clear that $\Lambda_1$ and $\Lambda_2$ satisfy properties \ref{periodiclambda0} and \ref{periodiclambda2}. Considering also that $$\tilde{G}(t,s+T,{\bf h}) = A_d\tilde{G}(t,s,A_d{\bf h}),$$ we conclude that $\tilde{\Lambda}_1$ and $\tilde{\Lambda}_2$ also satisfy those properties.

\section{Conditions for the existence of a real $T$-periodic Floquet normal form}
We provide necessary and sufficient conditions in terms of the A-index for the $T$-periodic transformations provided in Theorem \ref{theoremfloquet} to be of real nature. In fact, we prove that the existence of such transformation is equivalent to the vanishing of the A-index.
\begin{theorem}
	The following statements are equivalent:
	\begin{enumerate} [label=(\alph*)]
		\item \label{cnd2} The A-index of \eqref{systemmain} is zero.
		\item \label{cnd1} There are $R \in \R^{n \times n}$ and a $T$-periodic function $Q: \R \to \R^{n \times n}$ such that $\Psi(t):=Q(t)e^{tR}$ is a fundamental matrix solution of \eqref{systemmain}.
		\item \label{cnd3} Any fundamental matrix solution $\Phi(t)$ of \eqref{systemmain} admits a real Floquet normal form $\Phi(t) = P(t) e^{tB}$, where $B \in \R^{n \times n}$ and $P: \R \to \R^{n \times n}$ is $T$-periodic.
	\end{enumerate}
\end{theorem}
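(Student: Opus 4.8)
The plan is to prove the chain of implications $\ref{cnd2} \Rightarrow \ref{cnd3} \Rightarrow \ref{cnd1} \Rightarrow \ref{cnd2}$. The first implication is essentially contained in the proof of Theorem \ref{theoremmain}: if the A-index $d$ is zero, then every Jordan block of the monodromy matrix associated to a real negative eigenvalue occurs an even number of times, so Lemma \ref{lemmareallog} yields a real logarithm $TB$ of the monodromy matrix $\Phi^{-1}(0)\Phi(T)$ directly (with no need for the $i\pi$-shift or the block decomposition $\mathcal{J}_1 \oplus \mathcal{J}_2$). Then Theorem \ref{theoremfloquetrepresentation} with $k=1$ produces a $T$-periodic $P:\R\to\C^{n\times n}$ with $\Phi(t)=P(t)e^{tB}$, and since $B$ is real, $P(t)=\Phi(t)e^{-tB}$ is real. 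Here I should be slightly careful: Lemma \ref{lemmareallog} applies to the monodromy matrix itself, which is a real matrix, and its Jordan normal form is the one referred to in the definition of the A-index, so the hypothesis ``A-index zero'' is exactly the condition ``each Jordan block associated to a real negative eigenvalue occurs an even number of times'' required by the lemma.

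Next, $\ref{cnd3} \Rightarrow \ref{cnd1}$ is immediate: a real Floquet normal form $\Phi(t) = P(t)e^{tB}$ for some fundamental matrix solution is precisely a representation of the form required in \ref{cnd1}, taking $Q=P$ and $R=B$.

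The substantive direction is $\ref{cnd1} \Rightarrow \ref{cnd2}$. Suppose $\Psi(t) = Q(t)e^{tR}$ with $Q$ real and $T$-periodic and $R$ real. Evaluating at $t=0$ gives $\Psi(0) = Q(0)$, and at $t=T$ gives $\Psi(T) = Q(T)e^{TR} = Q(0)e^{TR}$; hence the monodromy matrix is $\Psi^{-1}(0)\Psi(T) = Q(0)^{-1}Q(0)e^{TR} = e^{TR}$. Thus the monodromy matrix is the exponential of the \emph{real} matrix $TR$. By Lemma \ref{lemmareallog}, a real matrix admitting a real logarithm must have the property that every Jordan block associated to a real negative eigenvalue occurs an even number of times; therefore no Jordan block associated to a real negative eigenvalue can occur an odd number of times, and the A-index of \eqref{systemmain} is zero. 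I expect the only mild subtlety to be bookkeeping about which fundamental matrix solution is used: the A-index is defined via an arbitrary fundamental matrix solution because the Jordan normal form of the monodromy matrix is solution-independent (as recalled in the Introduction), so it suffices to exhibit one fundamental matrix solution whose monodromy matrix has a real logarithm, which is exactly what $\Psi$ provides. No step presents a genuine obstacle; the proof is a direct assembly of Theorem \ref{theoremfloquetrepresentation}, Lemma \ref{lemmareallog}, and the elementary computation of the monodromy matrix from a Floquet factorization.
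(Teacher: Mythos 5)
Your proof is correct, but it takes a different chain of implications from the paper. The paper proves $\ref{cnd2}\Rightarrow\ref{cnd1}\Rightarrow\ref{cnd3}\Rightarrow\ref{cnd2}$: the first step is read off as a direct corollary of Theorem~\ref{theoremmain} (taking $d=0$), the second step $\ref{cnd1}\Rightarrow\ref{cnd3}$ is handled by a change of fundamental matrix ($\Phi(t)=\Psi(t)S$ with $S=\Psi^{-1}(0)\Phi(0)$, so $P=QS$ and $B=S^{-1}RS$), and the third is the same Lemma~\ref{lemmareallog} argument you use. You instead prove $\ref{cnd2}\Rightarrow\ref{cnd3}\Rightarrow\ref{cnd1}\Rightarrow\ref{cnd2}$: for $\ref{cnd2}\Rightarrow\ref{cnd3}$ you bypass Theorem~\ref{theoremmain} entirely, applying Lemma~\ref{lemmareallog} to the monodromy matrix of an arbitrary $\Phi$ (legitimate, since the Jordan form of the monodromy matrix is independent of the choice of fundamental matrix) and then Theorem~\ref{theoremfloquetrepresentation} with $k=1$; this makes $\ref{cnd3}\Rightarrow\ref{cnd1}$ trivial and sidesteps the change-of-basis bookkeeping. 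The two routes are essentially equivalent in length and rely on the same two ingredients (Lemma~\ref{lemmareallog} and Theorem~\ref{theoremfloquetrepresentation}); your version is perhaps slightly more self-contained since it does not invoke the heavier Theorem~\ref{theoremmain}, while the paper's ordering emphasizes that the equivalence is a byproduct of its main result.
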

\begin{proof}
	It is a direct consequence of Theorem \ref{theoremmain} that \ref{cnd2} implies \ref{cnd1}.
	
Suppose that \ref{cnd1} holds for $\Psi(t)=Q(t)e^{tR}$ and let $\Phi(t)$ be any fundamental matrix solution of \eqref{systemmain}.	
\[
\Phi(t) = \Psi(t) \underbrace{\Psi^{-1}(0) \Phi(0)}_S= Q(t)S S^{-1}e^{tR}S= Q(t)S e^{tS^{-1} R S}=P(t) e^{t B},
\]
where $P(t)=Q(t)S$ and $B=S^{-1} R S$.
	
	Finally, suppose that \ref{cnd3} holds. Accordingly, let $\Phi(t)=P(t)e^{tB}$ be a fundamental matrix solution of \eqref{systemmain}, with $B \in \R^{n \times n}$. Then, since $e^{TB} = \Phi^{-1}(0) \Phi(T)$, it follows from Lemma \ref{lemmareallog} that each Jordan block of the Jordan normal form of $\Phi^{-1}(0) \Phi(T)$ associated to a real negative eigenvalue occurs an even number of times. Thus, the A-index of \eqref{systemmain} is clearly zero. This establishes that \ref{cnd3} implies \ref{cnd2}, concluding the proof of the Theorem.
\end{proof}

In particular, by reminding the reader that the eigenvalues of any monodromy matrix of \eqref{systemmain} are called the \textbf{characteristic multipliers} of this system, we obtain the following Corollary:
\begin{corollary}
	Let $\Phi(t)$ be a fundamental matrix of \eqref{systemmain}. If every real characteristic multiplier of \eqref{systemmain} is non-negative, then there are a real matrix $B \in \R^{n \times n}$ and a $T$-periodic real matrix function $P: \R \to \R^{n \times n}$ such that $\Phi(t)=P(t)e^{tB}$.
\end{corollary}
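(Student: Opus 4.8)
The plan is to obtain the Corollary as an immediate consequence of the equivalence theorem proved just above, once one checks that the stated hypothesis forces the A-index of \eqref{systemmain} to vanish.

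First I would recall that any monodromy matrix $C=\Phi^{-1}(0)\Phi(T)$ of \eqref{systemmain} is nonsingular, being a product of invertible matrices; hence $0$ is never a characteristic multiplier of the system. Therefore the assumption that every real characteristic multiplier is non-negative is in fact the assumption that $C$ possesses no negative real eigenvalue.

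Next I would observe that, under this condition, the Jordan normal form of $C$ contains no Jordan block associated to a real negative eigenvalue. Since the A-index is, by definition, a sum ranging over precisely such blocks, that sum is empty and the A-index of \eqref{systemmain} equals zero.

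Finally, I would apply the preceding Theorem: its implication \ref{cnd2} $\Rightarrow$ \ref{cnd3} provides, for the given fundamental matrix solution $\Phi(t)$, a real matrix $B\in\R^{n\times n}$ and a real $T$-periodic matrix function $P\colon\R\to\R^{n\times n}$ with $\Phi(t)=P(t)e^{tB}$, which is exactly the claim. I do not expect a real obstacle in this argument; the only step deserving a moment's attention is the invertibility of the monodromy matrix, which is what guarantees that the non-negativity hypothesis actually excludes negative real eigenvalues and hence the Jordan blocks that would contribute to the A-index.
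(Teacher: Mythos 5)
Your argument is correct and matches the paper's intended (unwritten) derivation: since the monodromy matrix is invertible, the non-negativity hypothesis rules out negative real eigenvalues, so no Jordan block contributes to the A-index, which is therefore zero, and implication \ref{cnd2} $\Rightarrow$ \ref{cnd3} of the preceding Theorem gives the real $T$-periodic Floquet normal form. Nothing is missing.
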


\section*{Conflict of interest}
On behalf of all authors, the corresponding author states that there is no conflict of interest.

\section*{Acknowledgements}

DDN is supported by S\~{a}o Paulo Research Foundation (FAPESP) grants 2022/09633-5, 2019/10269-3, and 2018/13481-0, and by Conselho Nacional de Desenvolvimento Cient\'{i}fico e Tecnol\'{o}gico (CNPq) grant 309110/2021-1. PCCRP is supported by S\~{a}o Paulo Research Foundation (FAPESP) grant 2020/14232-4.

\bibliographystyle{abbrv}
\bibliography{references}

\end{document}